\newcommand{\complex}{{\mathbb C}}
\newcommand{\reals}{{\mathbb R}}
\newcommand{\integers}{{\mathbb Z}}
\newcommand{\calb}{{\mathcal B}}
\newcommand{\calh}{{\mathcal H}}
\newcommand{\cals}{{\mathcal S}}
\newtheorem{thm}{Theorem}[section]
\newtheorem{prop}[thm]{Proposition}
\theoremstyle{definition}
\theoremstyle{definition}
\newcommand{\be}{\begin{eqnarray}}
\newcommand{\ee}{\end{eqnarray}}
\theoremstyle{plain}
        \newtheorem{theorem}{Theorem}[section]
        \newtheorem{remark}[theorem]{Remark}
        \newtheorem{example}[theorem]{Example}
\begin{document}
\title{K-theory of Equivariant Quantization}
\author{Xiang Tang and Yi-Jun Yao}
\maketitle

\begin{abstract}{Using an equivariant version of Connes' Thom Isomorphism,w}e prove that equivariant $K$-theory is invariant under strict
deformation quantization for a compact
Lie group action.
\end{abstract}

\section{Introduction}\label{sec:intro}
Let $\alpha$ be a strongly continuous action of $\reals^n$ on a $C^*$-algebra $A$, and $J$ be a skew-symmetric matrix on
$\reals^n$. Rieffel \cite{rieffel:quantization} constructed a strict
deformation quantization $A_J$ of $A$ via oscillatory integrals
\begin{equation}\label{eq:star}
a\times_J b:=\int_{\reals^n\times \reals^n}
\alpha_{Ju}(a)\alpha_{v}(b)e^{2\pi i u\cdot v} du dv,
\end{equation}
for $u,v\in \reals^n$, and $a,b\in A^\infty$ (the smooth subalgebra
of $A$ for $\alpha$). Such a construction gives rise to many interesting examples
of noncommutative manifolds, e.g. quantum tori, $\theta$-deformation
of $S^4$, etc. In \cite{rieffel:k-quant}, Rieffel proved that the
$K$-theory of $A_J$ is equal to the $K$-theory of the original
algebra $A$, {by using Connes' Thom isomorphism of $K$-theory}.

In this paper, we are interested in examples that the algebra $A$ is
also equipped with a strongly continuous action $\beta$ by a compact
group $G$. When the two actions commute, {the results in \cite{rieffel:k-quant}} naturally
generalize to the equivariant setting. An easy observation is that,
as the $G$-action $\beta$ commutes with the $\reals^n$-action
$\alpha$, naturally $\alpha$ can be lifted to a strongly
continuous action $\tilde{\alpha}$ on the crossed product algebra
$A\rtimes_\beta G$. Rieffel's construction (\ref{eq:star}) applies
to the $\reals^n$-action $\tilde{\alpha}$ on $A\rtimes_\beta G$, and defines a quantization algebra $(A\rtimes_\beta
G)_J$. By the commutativity between $\alpha$ and $\beta$, we easily
check that $\beta$ lifts to a strongly continuous action
$\tilde{\beta}$ on $A_J$, and $A_J\rtimes_{\tilde{\beta}}G$ is
isomorphic to $(A\rtimes_\beta G)_J$. Now by
%Rieffel's theorem
{the results} on the
$K$-theory of strict deformation quantization
\cite{rieffel:k-quant}, we conclude that
\[
K_\bullet(A\rtimes_\beta G)=K_\bullet((A\rtimes_\beta
G)_J)=K_\bullet(A_J\rtimes_{\tilde{\beta}}G).
\]

In this paper, we generalize the above discussion of equivariant
quantization to a situation where the actions $\alpha$ and $\beta$
do not commute. Define $GL(J)$ to be the group of invertible
matrices $g$ such that $g^t J g =J$, and $SL_n(\reals,
J):=SL_n(\reals)\bigcap GL(J)$. We remark that when $J$ is the
standard skew-symmetric matrix on $\reals^{2n}$, $GL(J)$ is the
linear symplectic group. Let $\rho: G\rightarrow SL_n(\reals, J)$ be
a group homomorphism such that
\begin{equation}\label{eq:action}
\beta_g\alpha_x=\alpha_{\rho_g(x)}\beta_g,\ \ \ \text{for any}\ g\in
G, x\in \reals^n.
\end{equation}
When $\rho$ is a trivial group homomorphism, the actions $\alpha$
and $\beta$ commute.

A natural example of such a system appears as follows.
\begin{example}\label{ex:z-2}
Let $G=\integers_2=\integers/2\integers$ act on $\reals^{2n}$ by
reflection with respect to the origin. Let $\integers^{2n}$ be the integer lattice in
$\reals^{2n}$. The $2n$-torus ${\mathbb
T}^{2n}=\reals^{2n}/\integers^{2n}$ inherits an action of
$\integers_2$ from the $\integers_2$ action on $\reals^{2n}$. The
group $\reals^{2n}$ acts on $\reals^{2n}$ by translation and
descends to act on $\mathbb {T}^{2n}$. Let $A$ be the $C^*$-algebra
of continuous functions on ${\mathbb T}^{2n}$, and $J$ be the
standard symplectic matrix on $\reals^{2n}$. The action $\alpha$
(and $\beta$) of $\reals^{2n}$ (and $\integers_2$) on $A$ is the
dual action of the corresponding actions on ${\mathbb T}^{2n}$. We
easily check that Eq. (\ref{eq:action}) holds in this case with
$\rho$ being the natural inclusion $\integers_2\hookrightarrow
SL_{2n}(\reals, J)$.
\end{example}
Different from the case where the actions $\alpha$ and $\beta$
commute, for a nontrivial $\rho:G\rightarrow SL_n(\reals, J)$, the
$\reals^n$-action $\alpha$ on $A$ does not lift naturally to an
action on $A\rtimes_\beta G$. Therefore, we cannot apply Rieffel's
deformation construction to the algebra $A\rtimes_\beta G$.
Nevertheless, a simple calculation shows that
\[
\beta_g(a\times_J b)=\beta_g(a)\times_J \beta_g(b),\ \
\beta_g(a^*)=\beta_g(a)^*,
\]
which shows that the $G$-action $\beta$ is still well-defined on
$A_J$. Accordingly, we can consider the crossed product algebra
$A_J\rtimes_\beta G$. Applying this construction to Ex.
\ref{ex:z-2}, we obtain $A_J\rtimes_\beta \integers_2$, which is
well studied in literature, e.g. \cite{ech-luck-phil:k-qtorus},
\cite{farsi:qorbfld}, \cite{ku:k-theory}, and \cite{walters:k-theory}.

In this paper, we prove the following theorem about the $K$-theory
groups of $A_J\rtimes_\beta G$.
\begin{theorem}\label{thm:main-k}%When $A$ is a separable $C^*$-algebra, and
If the actions $\alpha$,  $\beta$ and the group homomorphism $\rho$
satisfy (\ref{eq:action}), then
\[
K_\bullet(A_J\rtimes_\beta G)\cong K_{\bullet}(A\rtimes_\beta G),\ \
\ \bullet=0,1.
\]
\end{theorem}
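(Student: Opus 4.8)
The plan is to package the two actions into a single action of a semidirect product and then run Connes' Thom isomorphism $G$-equivariantly. The compatibility relation (\ref{eq:action}) says precisely that $\gamma_{(x,g)}:=\alpha_x\beta_g$ defines an action $\gamma$ of $H:=\reals^n\rtimes_\rho G$ on $A$, where $(x,g)(y,h)=(x+\rho_g(y),gh)$; indeed $\gamma_{(x,g)}\gamma_{(y,h)}=\alpha_x\beta_g\alpha_y\beta_h=\alpha_{x+\rho_g(y)}\beta_{gh}$. Since $\rho_g\in GL(J)$, the same formula defines an action of $H$ on the deformed algebra $A_J$ as well, because the relation $\beta_g\alpha_x=\alpha_{\rho_g(x)}\beta_g$ persists on $A_J$ and $\beta$ is compatible with $\times_J$ exactly as recorded in the excerpt. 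As $\reals^n$ is normal in $H$ with quotient $G$, the iterated crossed-product decomposition gives
\[
A\rtimes_\gamma H\cong (A\rtimes_\alpha\reals^n)\rtimes_{\hat\beta}G,\qquad A_J\rtimes_\gamma H\cong (A_J\rtimes_\alpha\reals^n)\rtimes_{\hat\beta}G,
\]
where $\hat\beta$ is the induced $G$-action that also twists the $\reals^n$-variable by $\rho$; note $\det\rho_g=1$, so this twist preserves Haar measure and introduces no modular factor.

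The first ingredient is a $G$-equivariant upgrade of Rieffel's crossed-product isomorphism. Rieffel's analysis of deformation by $\reals^n$-actions (the content underlying \cite{rieffel:k-quant}) produces a canonical isomorphism $A_J\rtimes_\alpha\reals^n\cong A\rtimes_\alpha\reals^n$, and I would verify that it intertwines the two copies of $\hat\beta$. This is where $\rho:G\to SL_n(\reals,J)$ enters at the algebra level: since each $\rho_g$ preserves $J$, conjugation by $\hat\beta_g$ commutes with the deformation, so the naturality of Rieffel's construction forces the isomorphism to be $G$-equivariant. Taking crossed products by the compact group $G$ then yields
\[
(A_J\rtimes_\alpha\reals^n)\rtimes_{\hat\beta}G\cong (A\rtimes_\alpha\reals^n)\rtimes_{\hat\beta}G,
\]
equivalently $A_J\rtimes_\gamma H\cong A\rtimes_\gamma H$.

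The second, and harder, ingredient is an equivariant Connes' Thom isomorphism: for a compact $G$ normalizing an $\reals^n$-action through an orientation-preserving $\rho$, I claim
\[
K_\bullet\big((B\rtimes_\alpha\reals^n)\rtimes_{\hat\beta}G\big)\cong K_{\bullet+n}(B\rtimes_\beta G)
\]
for $B=A$ and $B=A_J$. I would prove this by realizing the Thom class as a $G$-equivariant $KK$-element: the Fack--Skandalis Thom element $t\in KK^n(B,B\rtimes_\alpha\reals^n)$ is an invertible class, and because $\rho$ lands in $SL_n(\reals)$ the relevant orientation (Bott) class on $\reals^n$ is $G$-fixed, so $t$ lifts to the equivariant group $KK^G$. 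Applying Kasparov's descent $j^G$, valid since $G$ is compact, produces an invertible element of $KK^n\big(B\rtimes_\beta G,(B\rtimes_\alpha\reals^n)\rtimes_{\hat\beta}G\big)$, which induces the displayed isomorphism on $K$-theory. The orientation-preserving hypothesis is essential: for a $\rho$ reversing orientation the Thom class would cease to be $G$-invariant and the argument would break.

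Combining the three displays proves the theorem:
\[
K_\bullet(A_J\rtimes_\beta G)\cong K_{\bullet+n}\big((A_J\rtimes_\alpha\reals^n)\rtimes_{\hat\beta}G\big)\cong K_{\bullet+n}\big((A\rtimes_\alpha\reals^n)\rtimes_{\hat\beta}G\big)\cong K_\bullet(A\rtimes_\beta G),
\]
the outer isomorphisms being the equivariant Thom isomorphism and the middle one the equivariant Rieffel isomorphism, with the two degree shifts of $n$ canceling. I expect the main obstacle to be the equivariant Connes' Thom isomorphism itself --- concretely, showing that the Thom/Bott class survives as an \emph{invertible} element after descent by $G$ --- and this is exactly where compactness of $G$ and the $SL_n(\reals,J)$-valuedness of $\rho$ do the essential work.
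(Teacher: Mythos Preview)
Your overall architecture is sound and both you and the paper lean on the same hard ingredient, an equivariant Connes--Thom isomorphism for the semidirect product $\reals^n\rtimes_\rho G$; but the route is genuinely different. The paper never forms $A_J\rtimes_\alpha\reals^n$ at all. Instead it follows Rieffel's original three-step scheme: (I) an equivariant Morita equivalence $A_J\sim (\overline{\cals}^A_J\rtimes_\nu\reals^n)$, proved via Combes' theorem on equivariant imprimitivity bimodules; (II) the equivariant Thom isomorphism applied to the auxiliary algebra $\overline{\cals}^A_J$; (III) Rieffel's identification $\overline{\cals}^A_J\cong A\otimes\mathcal{K}\otimes C_\infty(V_0)$, checked to be $G$-equivariant, followed by a $spin^c$ argument on the image of $J$ to strip off the Clifford factor. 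Your path is shorter and conceptually cleaner \emph{provided} the middle isomorphism $A_J\rtimes_\alpha\reals^n\cong A\rtimes_\alpha\reals^n$ is available $G$-equivariantly; the paper's path stays entirely inside the machinery Rieffel actually built in \cite{rieffel:quantization} and \cite{rieffel:k-quant}.

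That middle isomorphism is the real gap in your proposal. It is true --- it is essentially the Landstad/Kasprzak picture of Rieffel deformation, in which $A_J$ arises from $A\rtimes_\alpha\reals^n$ by modifying only the dual action via the $J$-cocycle, leaving the crossed product itself unchanged --- but it is \emph{not} what \cite{rieffel:k-quant} proves, and your phrase ``the content underlying \cite{rieffel:k-quant}'' does not discharge it. You would need to supply this argument and then verify $G$-equivariance (which does follow from $\rho_g^tJ\rho_g=J$, but must be said). A second, smaller imprecision: the equivariant Thom isomorphism you invoke is most naturally stated with a Clifford twist $B\mapsto B\otimes\complex_n$ rather than a bare degree shift, and ``$\rho$ lands in $SL_n(\reals)$'' is not by itself enough to trivialize the $G$-action on $\complex_n$; this is exactly why the paper invokes a $G$-invariant complex structure compatible with $J$ in Step~III. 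In your argument the same Clifford twist appears on both sides and cancels, so the conclusion survives, but the justification ``the Bott class is $G$-fixed because $\det\rho_g=1$'' should be replaced by the observation that the twist is identical for $A$ and $A_J$.
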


The proof of this theorem will be presented in the next section.
%Asan important step, we prove an equivariant version of Connes' Thomisomorphism theorem \cite{connes:thom}.
As applications of our theorem, we recover some results of
\cite{ech-luck-phil:k-qtorus} on the computation of the
$K$-groups of ${\mathbb Z}_i$-quantum tori for $i=2,3,4,6$,  and we %generalize
{also apply}
 these results to the $\theta$-deformation \cite{connes-landi:theta}
of $S^4$.\\

\noindent{\bf Acknowledgments:} We would like to thank Professors S. Echterhoff and N. Higson for explaining the relationship between the equivariant
Thom isomorphism theorem (Theorem \ref{thm:equ-thom}) and the Connes-Kasparov conjecture. We also want to thank Professor H. Li for interesting
discussions and comments which greatly helped us to improve the readability of the paper. {We thank Professor H. Oyono-Oyono for helping us
to remove a separability assumption in a previous version.} And we are grateful to an anonymous referee for pointing out
one mistake and various places to improve accuracy in a previous version of this paper. Tang's research is partially
supported by NSF grant 0900985. Tang would like to thank the School
of Mathematical Sciences of Fudan University and the Max-Planck
Institute for their warm hospitality of his visits. Yao's research
is partially supported by NSF grant 0903985 and NSFC grant
10901039
%/A010602
{and 11231002}.

\section{Proof of the main theorem}
Our proof of Theorem \ref{thm:main-k} is an equivariant
generalization of Rieffel's
%original
proof in \cite{rieffel:k-quant}.  We first prove the theorem under the assumption that $A$ is separable. Following
\cite{rieffel:k-quant}, we will decompose our
proof into 3 steps.
\iffalse
In our proof, we need an equivariant generalization of Connes' Thom isomorphism Theorem
\cite{connes:thom}. We cannot find a proof\footnote{As Prof.
Echterhoff pointed out, this result is basically a corollary of the
validity Connes-Kasparov Conjecture for the group $\reals^n\rtimes
G$. The proof that we give here is relatively pedestrian using
Connes' pseudodifferential calculus \cite{connes:ncg}.} of such a
theorem in literature and therefore have decided to provide it
below.\fi
%\subsection{Proof of Theorem \ref{thm:main-k}} In this section, we prove Theorem \ref{thm:main-k}.

\bigskip

\noindent{\bf Step I.} Following %Rieffel's
{the notations in}
\cite{rieffel:k-quant}, we let $\calb^A$ be the space of smooth
$A$-valued functions on $\reals^n$ whose derivatives together with
themselves are bounded on $\reals^n$. Let $\cals^A$ be the space of
$A$-valued Schwartz functions on $\reals^n$. The  integral
\[
\langle f, g\rangle_A:=\int f(x)^*g(x)dx
\]
defines an $A$-valued inner product on $\cals^A$. Rieffel generalized the definition to
$\calb^A$ by using oscillatory integrals. Namely, given $J$, we
define a product on $\calb^A$ by
\[
(F\times_J G)(x):=\int F(x+Ju)G(x+v)e^{2\pi i u\cdot v}dudv,\qquad F,G\in
\calb^A.
\]
Furthermore, $\calb^A$ acts on $\cals^A$ by
\[
(L^J_F f)(x):=\int F(x+Ju)f(x+v)e^{2\pi i u\cdot v}dudv,\qquad F\in
\calb^A,\ f\in \cals^A.
\]
The above two integrals are both oscillatory ones.
Via the $A$-valued inner product on $\cals^A$, we can equip
$\calb^A$ with the operator norm $\|\ \|_J$,  and obtain a pre-$C^*$-algebra
$(\calb^A_J,\times_J, \|\ \|_J)$. Denote the corresponding
$C^*$-algebra by $\overline{\calb}^A_J$.
Meanwhile, $\cals^A$ viewed as a $*$-ideal of $\calb^A_J$  (cf.
Rieffel, \cite{rieffel:quantization}), denoted by $\cals^A_J$, can be completed
into $\overline{\cals}^A_J$.

With an action $\alpha$ of $\reals^n$ on $A$, Rieffel \cite[Prop.
1.1]{rieffel:k-quant} introduced a strongly continuous
$\reals^n$-action $\nu$ on $\overline{\calb}^A_J$ and also on
$\overline{\cals}^A_J$ by
\[
(\nu_t(F))(x):=\alpha_t(F(x-t)).
\]
The fixed point subalgebra of this action $\nu$ is identified
\cite[Prop. 2.14]{rieffel:k-quant} with the $C^*$-subalgebra of
$\overline{\calb}^A_J$ generated by elements
\[
\tilde{a}(x):=\alpha_x(a),\qquad a\in A^\infty,
\]
which is exactly $A_J$.

%Rieffel
{In \cite[Thm. 3.2]{rieffel:k-quant}, it is }proved that $A_J$ is
strongly Morita equivalent to $\overline{\cals}^A_J\rtimes_\nu
\reals^n$. We will generalize this theorem to the equivariant
setting with the $G$-action $\beta$. We introduce the $G$-action $\overline{\beta}$ on ${\calb}^A_J$ by
\[
\overline{\beta}_g(F)(x):=\beta_g(F(\rho_{g^{-1}}(x))).
\]
The exactly same arguments as {in} \cite[Prop. 1.1]{rieffel:k-quant} prove
that the $G$-action $\overline{\beta}$ is strongly continuous on
$\cals^A$, therefore so is it on $\overline{\cals}^A_J$.

\begin{prop}\label{prop:fixed-pt}The crossed product algebras $A_J\rtimes_\beta
G$ and $(\overline{\cals}^A_J\rtimes_\nu
\reals^n)\rtimes_{\bar{\beta}}G$ are strongly Morita equivalent.
\end{prop}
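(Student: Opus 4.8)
The plan is to promote Rieffel's Morita equivalence $A_J\sim\overline{\cals}^A_J\rtimes_\nu\reals^n$ from \cite[Thm.~3.2]{rieffel:k-quant} to a $G$-equivariant one, and then to invoke the general principle that a $G$-equivariant imprimitivity bimodule descends to a strong Morita equivalence of the crossed products by $G$. Recall that the equivalence of \cite[Thm.~3.2]{rieffel:k-quant} is implemented by an imprimitivity bimodule $X$ obtained by completing $\cals^A$: it carries a left $A_J$-action (through the operators $L^J$) with an $A_J$-valued inner product, and a right action of $\overline{\cals}^A_J\rtimes_\nu\reals^n$ with a crossed-product-valued inner product.

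First I would record the compatibility between $\overline{\beta}$ and $\nu$. A direct computation from the definitions, using (\ref{eq:action}) to move $\beta_g$ past $\alpha$, yields
\[
\overline{\beta}_g\,\nu_t=\nu_{\rho_g(t)}\,\overline{\beta}_g,\qquad g\in G,\ t\in\reals^n .
\]
Thus $(\nu,\overline{\beta})$ is an action of the semidirect product $\reals^n\rtimes_\rho G$ on $\overline{\cals}^A_J$; in particular $\overline{\beta}$ normalizes the $\reals^n$-action $\nu$ and therefore descends to a strongly continuous $G$-action $\bar\beta$ on $\overline{\cals}^A_J\rtimes_\nu\reals^n$, which is the action named in the statement.

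Next I would let $G$ act on the bimodule $X$ by the same formula $\overline{\beta}_g(f)(x)=\beta_g(f(\rho_{g^{-1}}(x)))$ for $f\in\cals^A$, and verify that this makes $X$ a $G$-equivariant imprimitivity bimodule. Concretely one checks
\[
\overline{\beta}_g(a\cdot\xi)=\beta_g(a)\cdot\overline{\beta}_g(\xi),\qquad
\overline{\beta}_g(\xi\cdot b)=\overline{\beta}_g(\xi)\cdot\bar\beta_g(b),
\]
for $a\in A_J$, $b\in\overline{\cals}^A_J\rtimes_\nu\reals^n$ and $\xi\in X$, together with compatibility of $\overline{\beta}$ with both the left $A_J$-valued and the right crossed-product-valued inner products. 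Each identity reduces to a manipulation of the defining oscillatory integrals in which (\ref{eq:action}) is used to interchange $\beta_g$ with $\alpha$ and with the substitution $x\mapsto\rho_{g^{-1}}(x)$.

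Finally I would apply the standard fact that a $G$-equivariant imprimitivity bimodule between $G$-$C^*$-algebras $(B,\sigma)$ and $(C,\tau)$ induces a strong Morita equivalence $B\rtimes_\sigma G\sim C\rtimes_\tau G$, implemented by the completion of $C_c(G,X)$. Specializing to $B=A_J$ with $\sigma=\beta$ and $C=\overline{\cals}^A_J\rtimes_\nu\reals^n$ with $\tau=\bar\beta$ gives the asserted equivalence. The main obstacle is the equivariance of the right inner product in the previous step: this is the only place where the noncommutativity of $\alpha$ and $\beta$ genuinely intervenes, and one has to check that the twist by $\rho$ built into $\bar\beta$ matches the twist already present in $\nu$ so that the crossed-product-valued inner product is truly $G$-equivariant.
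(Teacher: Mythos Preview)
Your proposal is correct and follows essentially the same route as the paper: you upgrade Rieffel's imprimitivity bimodule from \cite[Thm.~3.2]{rieffel:k-quant} to a $G$-equivariant one via the action $\overline{\beta}_g(f)(x)=\beta_g(f(\rho_{g^{-1}}(x)))$, verify the compatibility identities (the paper records the analogous relations $\overline{\beta}_g\alpha_t=\alpha_{\rho_g(t)}\overline{\beta}_g$, $\overline{\beta}_g\tau_t=\tau_{\rho_g(t)}\overline{\beta}_g$, $\overline{\beta}_g\mu_t=\mu_{(\rho_g^T)^{-1}(t)}\overline{\beta}_g$), and then invoke the standard descent principle, which in the paper is cited explicitly as Combes' theorem \cite{combes:crossed}. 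The only cosmetic difference is that the paper has $A_J$ acting on the right of the bimodule rather than the left.
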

\begin{proof}We will apply Combes' theorem \cite[Sec. 6]{combes:crossed}
on equivariant Morita equivalence %. We will prove
 {after proving} that the $G$-actions $\beta$ and $\bar{\beta}$ are Morita equivalent, which
% by Combes' theorem implies
 {will imply} the Morita equivalence we seek.

According to \cite{combes:crossed}, two $G$-actions
$\beta^1,\beta^2$ on $A$ and $B$ are Morita equivalent if there is a
strong Morita equivalence bimodule $X$ between $A$ and $B$ such that
there is a $G$-action $\beta$ on $X$ satisfying
\[
\begin{split}
\beta_g(a\xi)=\beta^1_g(a)\beta_g(\xi),\ \ \qquad\qquad
\qquad&\beta_g(\xi
b)=\beta_g(\xi)\beta^2_g(b),\\
 _A\langle \beta_g(\xi_1),
\beta_g(\xi_2) \rangle=\beta^1_g(_A \langle \xi_1,
\xi_2\rangle),\qquad &\langle \beta_g(\xi_1),
\beta_g(\xi_2)\rangle_B=\beta^2_g(\langle \xi_1, \xi_2\rangle_B).
\end{split}
\]
for $\xi, \xi_1, \xi_2\in X$.
\smallskip

Rieffel \cite{rieffel:k-quant} constructed a Morita equivalence
bimodule between $A_J$ and $\overline{\cals}^A_J\rtimes _\nu \reals^n$. We
recall it now. Let $C_\infty(\reals^n, A)$ be the $C^*$-algebra of
$A$-valued functions on $\reals^n$ that vanish at infinity. Let
$\tau$ be the $\reals^n$-action on $\calb^A_J$ by translation,
$(\tau_tF)(x)=F(t+x)$, and $\mu$ be the action of $\reals^n$ on
$C_\infty(\reals^n,A)$ by
\[
\mu_s(f)(x)=e^{2\pi i s\cdot x}f(x).
\]
Define an action $\alpha$ of $\reals^n$ on $\overline{\cals}^A_J$ by
\[
\alpha_t(F)(x)=\alpha_t(F(x)).
\]
Both $\mu$ and $\tau$ act on $C_\infty(\reals^n,A)$ and  their combination gives an
action of the Heisenberg group $H$ of
dimension $2n+1$ on $C_\infty(\reals^n, A)$. This
Heisenberg group action commutes with $\alpha$ and
defines an $H\times \reals^n$-action $\sigma$ on $C_\infty(\reals^n,
A)$. Define $X_0$ to be the subspace of $C_\infty(\reals^n, A)$ of
$\sigma$-smooth vectors. Rieffel \cite[Prop. 2.2]{rieffel:k-quant} proved that $X_0$ is a $*$-subalgebra of $\cals^A_J$ for any $J$, and a suitable
completion $\overline{X}_0$ of $X_0$ serves as a strong Morita equivalence bimodule, which we refer to \cite{rieffel:k-quant} for details.

Define a right $A_J$-module structure on $\overline{X}_0$ by
identifying $A_J$ with the subspace of $\nu$-invariant vectors in
$\overline{\calb}^A_J$, i.e.
\[
f\cdot a:=f\times_J \tilde{a},\qquad \text{for}\ a\in A^\infty
\]
where $\tilde{a}\in C^\infty(\reals^n, A)$ is defined by $\tilde{a}(x)=\alpha_x(a)$.
The algebra
$\overline{\cals}^A_J\rtimes_{\nu}\reals^n$ acts on $\overline{X}_0$ by
\[
\psi(f):=\int \psi(t)\times_J\nu_{t}(f)dt,\qquad \psi\in
\overline{\cals}^A_J\rtimes_{\nu}\reals^n,\ f\in \overline{X}_0.
\]
We define an $\overline{\cals}^A_J\rtimes_{\nu}\reals^n$-valued inner
product on $\overline{X}_0$ by
\[
_{\overline{\cals}^A_J\rtimes_{\nu}\reals^n}\langle
f,g\rangle(x):=f\times_J \nu_x(g^*),\qquad x\in \reals^n,\ f,g\in
\overline{X}_0,
\]
and an $A_J$-valued inner product on $\overline{X}_0$ by
\[
\langle f,g\rangle_{A_J}:=\left(\int \alpha_t(f^*\times_J g(-t))dt\right),\qquad f,g\in
\overline{X}_0.
\]
{We also know from \cite{rieffel:k-quant} that}%Rieffel
 %proved that
$\left(\overline{X}_0,\,\,
_{\overline{\cals}^A_J\rtimes_{\nu}\reals^n}\langle\ ,\ \rangle,
\langle\ ,\ \rangle_{A_J}\right)$ is a strong Morita equivalence
bimodule between $\overline{\cals}^A_J\rtimes_\nu \reals^n$ and
$A_J$.

We easily check the following identities between the actions
\[
\overline{\beta}_g\alpha_t=\alpha_{\rho_g(t)}\overline{\beta}_g,\qquad
\overline{\beta}_g\tau_t=\tau_{\rho_g(t)}\overline{\beta}_g, \qquad
\overline{\beta}_g\mu_{t}=\mu_{(\rho_g^T)^{-1}(t)}\overline{\beta}_g,\qquad
g\in G,\ t\in \reals^n.
\]
where $\rho_g^{T}$ is the transpose of $\rho_g$. These identities
show that the $G$-action $\overline{\beta}$ on $C_\infty(\reals^n,
A)$ preserves the subspace $X_0$ of $\sigma$-smooth vectors. Using
the property that $\beta$ and $\overline{\beta}$ act strongly
continuously on $\overline{\cals}^A_J$, we can easily check that
$\beta$ and $\overline{\beta}$ are Morita equivalent
$G$-actions in the sense of Combes \cite{combes:crossed}.
Therefore, $A_J\rtimes_\beta G$ is strongly Morita equivalent to
$(\overline{\cals}^A_J\rtimes_{\nu}\reals^n)\rtimes_{\overline{\beta}}
G$.
\end{proof}

As $A$ is separable, $A$ has a countable approximate identity. This implies that \cite[Cor. 3.3]{rieffel:k-quant} $A_J$ (and $\overline{\cals}^A_J$) has
a countable approximate identity. Accordingly, $A_J\rtimes_\beta G$ (and $(\overline{\cals}^A_J\rtimes_{\nu}\reals^n)\rtimes_{\overline{\beta}}
G$) also has a countable approximate identity, and therefore has strictly positive elements. This together with the above Morita equivalence result
shows that $A_J\rtimes_\beta G$ and $(\overline{\cals}^A_J\rtimes_{\nu}\reals^n)\rtimes_{\overline{\beta}}
G$ are stably isomorphic. As stably isomorphic $C^*$-algebras have isomorphic $K$-groups,
we conclude that
\[
K_\bullet(A_J\rtimes_\beta G)\cong
K_{\bullet}((\overline{\cals}^A_J\rtimes_{\nu}\reals^n)\rtimes_{\beta}G).
\]
\noindent{\bf Step II.} {As we know, one powerful tool in dealing with the $K$-theory of $C^*$-algebras is Connes' Thom isomorphism,
which remains to this day one of the few ways to prove isomorphism results of $K$-groups for crossed products.} Let $\complex_n$ be the complex Clifford
algebra associated with $\reals^n$. We first observe that the semidirect product group $\reals^n\rtimes_{\rho} G $  is amenable, hence by
Kasparov\footnote{In \cite[\S 6, Thm. 2.]{kasp:nov}, the connectivity of the group $G$  is assumed. But this assumption can be easily dropped using the
same idea of the proof.} \cite[\S 6, Thm. 2.]{kasp:nov} we know that for a separable $G$-$C^*$-algebra $B$, there exists an isomorphism from
$KK^i(\complex, B\rtimes (\reals^n\rtimes G))$ to $KK^i(\complex, ((B\otimes \complex_n)\rtimes G)$.
{In other words, we need to use the following equivariant Thom isomorphism Theorem, which is a generalization of Connes' Thom isomorphism
Theorem \cite{connes:thom}. This is a key ingredient of the whole approach.}
\begin{theorem}\label{thm:equ-thom}Let $\reals^n$ and $G$ act
strongly continuously on a separable $C^*$-algebra $B$ with the
actions denoted by $\alpha$ and $\beta$. Let $\rho:G\rightarrow
GL(n, \reals)$. If the actions $\alpha$ and $\beta$ satisfy Equation
(\ref{eq:action}), then
\[
K_\bullet((B\rtimes_\alpha
\reals^n)\rtimes_{\beta} G)\cong
K^G_\bullet\big(B\rtimes_\alpha
\reals^n\big)\cong K^G_{\bullet}(B\otimes \complex_n)\cong
K_{\bullet}((B\otimes \complex_n)\rtimes_{\beta} G),
\]
where $\complex_n$ is the complex Clifford algebra associated with
$\reals^n$.
\end{theorem}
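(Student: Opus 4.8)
The statement links four groups by two elementary isomorphisms at the ends and one substantive isomorphism in the middle, and the plan is to establish both ends together with the composite isomorphism between the two outermost groups, thereby forcing the middle one. First I would record the end isomorphisms via the Green--Julg theorem: since $G$ is compact, for any $G$-$C^*$-algebra $D$ one has $K^G_\bullet(D)\cong K_\bullet(D\rtimes_\beta G)$. Applied to $D=B\rtimes_\alpha\reals^n$, equipped with the $G$-action $\tilde\beta$ lifted from $\beta$ and $\rho$, this gives the first isomorphism; applied to $D=B\otimes\complex_n$ it gives the last. Here the $G$-action on $\complex_n$ is defined after averaging to a $G$-invariant inner product on $\reals^n$, so that $\rho(G)\subset O(n)$ acts by automorphisms of the Clifford algebra, and the action on $B\otimes\complex_n$ is $\beta$ tensored with this Clifford action.

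Next I would identify the iterated crossed product with a single crossed product by the semidirect product group. The covariance relation (\ref{eq:action}) is exactly the compatibility needed to make $\reals^n\rtimes_\rho G$ act on $B$, and the standard identity for iterated crossed products then yields
\[
(B\rtimes_\alpha\reals^n)\rtimes_{\tilde\beta}G\cong B\rtimes(\reals^n\rtimes_\rho G),
\]
so the first group equals $K_\bullet\big(B\rtimes(\reals^n\rtimes_\rho G)\big)$. The group $\reals^n\rtimes_\rho G$ is amenable, being an extension of the compact (hence amenable) group $G$ by the amenable group $\reals^n$; consequently its full and reduced crossed products coincide and $B\rtimes(\reals^n\rtimes_\rho G)$ is unambiguous.

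The heart of the argument is the isomorphism between the two outermost groups, which is precisely the validity of the Connes--Kasparov conjecture for the amenable group $\reals^n\rtimes_\rho G$, whose maximal compact subgroup is $G$ and whose associated homogeneous space is $\reals^n$. This is Kasparov's theorem \cite[\S6, Thm.~2]{kasp:nov} quoted above, giving
\[
K_\bullet\big(B\rtimes(\reals^n\rtimes_\rho G)\big)\cong K_\bullet\big((B\otimes\complex_n)\rtimes_\beta G\big),
\]
where the Clifford algebra arises as that of the tangent space $\reals^n$ of the homogeneous space with its $G$-invariant metric. Composing this with the two Green--Julg isomorphisms identifies the first group with the fourth and hence forces the desired middle isomorphism $K^G_\bullet(B\rtimes_\alpha\reals^n)\cong K^G_\bullet(B\otimes\complex_n)$. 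I expect the main obstacle to lie in this middle step: one must invoke Kasparov's Dirac--dual-Dirac machinery rather than a direct Thom-type construction, and one must verify that his hypothesis can be relaxed from connected $G$ to the general compact $G$ needed here (for instance $G=\integers_2$ in Example \ref{ex:z-2}). As noted in the footnote, this follows from the same proof, since the Dirac and dual-Dirac elements and the equality $\gamma=1$ for amenable groups do not require connectivity.
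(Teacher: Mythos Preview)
Your proposal is correct and follows essentially the same approach as the paper: both reduce the statement to Kasparov's result \cite[\S6, Thm.~2]{kasp:nov} for the amenable group $\reals^n\rtimes_\rho G$, with the same remark that the connectivity hypothesis on $G$ can be dropped. You are simply more explicit than the paper about the Green--Julg identifications at the two ends and about the isomorphism $(B\rtimes_\alpha\reals^n)\rtimes_{\tilde\beta}G\cong B\rtimes(\reals^n\rtimes_\rho G)$, which the paper leaves implicit.
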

Taking $B=\overline{\cals}^A_J$ in the above theorem which is separable (as $A$ is separable), we conclude that
$K_\bullet((\overline{\cals}^A_J\otimes \complex_n)\rtimes_{\bar{\beta}} G)$
is isomorphic to
$K_{\bullet}((\overline{\cals}^A_J\rtimes_{\nu}\reals^n)\rtimes_{\overline{\beta}}G)$.
\medskip

\noindent{\bf Step III.} Rieffel proved \cite[Prop.
5.2]{rieffel:quantization} that there is an isomorphism
\begin{equation}\label{eq:iso}
\overline{\cals}^A_J\cong A\otimes {\mathcal K}\otimes C_\infty(V_0),
\end{equation}
where ${\mathcal K}$ is the algebra of compact operators on an infinite dimensional separable Hilbert
space $\calh$, and $V_0$ is the kernel of $J$ in $\reals^n$. Let $U$ be the
orthogonal complement of $V_0$ in $\reals^n$. It is easy to check that $U$ is a $J$-invariant subspace, and both $U$ and $V_0$ are
$G$-invariant subspaces. As $G$ is compact, there is a $G$-invariant
complex structure on $U$ compatible with $J|_U$ (viewed a symplectic form on $U$). Without loss of
generality, we will just assume that $G$ preserves the standard
Euclidean structure on $U$. The key observation in the proof of
\cite[Prop. 5.2]{rieffel:quantization} is that when $A$ is the
trivial $C^*$-algebra $\complex$ and $J$ invertible, $\overline{\cals}^\complex_J$ is
naturally identified as the space of compact operators, still denoted by $\mathcal K$, on the
subspace $\calh$ of $L^2(U)$ generated by elements
\[
g(\bar{z})e^{-\frac{\|z\|^2}{2}},
\]
where $g$ is an anti-holomorphic function. As $\calh$ is a
$G$-invariant subspace, we can conclude that Rieffel's isomorphism
(\ref{eq:iso}) is $G$-equivariant (note that $G$ acts on ${\mathcal {K}}$ by conjugation). By Combes' result on G-equivariant Morita equivalence,
$(A\otimes {\mathcal K}\otimes
C_\infty(V_0)\otimes\complex_n)\rtimes_{\bar{\beta}} G$ is strongly
Morita equivalent to $(A\otimes C_{\infty}(V_0)\otimes
\complex_n)\rtimes_{\bar{\beta}} G$.

Now we look at the decomposition
of $\reals^n$ as $V_0\oplus U$. The Clifford algebra $\complex_n$
associated with $\reals^n$ is $G$-equivariantly isomorphic to
$\complex_{V_0}\otimes \complex_{U}$, where $\complex_{V_0}$ and
$\complex_U$ are the complex Clifford algebras associated with $V_0$ and
$U$, respectively. Notice that $J$ restricts to define a symplectic form on $U$,
and that the action of $G$ preserves both the restricted $J$ and the
metric on $U$. Therefore the $G$-action on $U$ is $spin^c$. Hence,
the algebra $(A\otimes C_{\infty}(V_0)\otimes
\complex_n)\rtimes_{\bar{\beta}} G$ is $KK$-equivalent to $(A\otimes
C_{\infty}(V_0)\otimes \complex_{V_0})\rtimes_{\bar{\beta}} G$.
Again by the $G$-equivariant Thom isomorphism Thm. \ref{thm:equ-thom} for the trivial $V_0$ action on $A$, we conclude that
\[
\begin{split}
K_{\bullet}((\overline{\cals}^A_J\otimes\complex_n)\rtimes_{\bar{\beta}}
G)&=K_{\bullet}((A\otimes {\mathcal K}\otimes
C_\infty(V_0)\otimes\complex_n)\rtimes_{\bar{\beta}}
G)\\
&=K_\bullet(\big(A\otimes C_\infty(V_0)\otimes
\complex_{V_0}\big)\rtimes_{\bar{\beta}} G)=K_\bullet(A\rtimes_\beta
G).
\end{split}
\]

Summarizing Step I-III, we have the following equality,
\[
K_\bullet(A_J\rtimes_\beta G)\stackrel{\text{Step
I}}{===}K_\bullet((\overline{\cals}^A_J\rtimes_{\nu}
\reals^n)\rtimes_{\bar{\beta}} G)\stackrel{\text{Step
II}}{===}K_{\bullet}((\overline{\cals}^A_J\otimes\complex_n)\rtimes_{\bar{\beta}}
G)\stackrel{\text{Step III}}{===}K_\bullet(A\rtimes_\beta G).
\]
{This completes the proof of Theorem \ref{thm:main-k} under the assumption that $A$ is separable. For a general $C^*$-algebra $A$, we can write
$A$ as an inductive limit of a net $A^I$ of separable $\reals^n\rtimes_\rho G$-algebras. Then $A_J$ is an inductive limit of the net $A^I_J$
of separable $G$-algebras. As $K$-groups commutes with inductive limit, we conclude that
\[K_\bullet(A_J\rtimes_\beta G)=\lim\limits_I K_\bullet(A_J^I\rtimes_\beta G)=\lim\limits_I K_\bullet(A^I\rtimes_\beta G)=K_\bullet(A\rtimes_\beta G).
\]
This completes the proof of Theorem \ref{thm:main-k} for general $C^*$-algebras.}

\section{Examples}
In this section, we discuss some applications of Theorem
\ref{thm:main-k}.
\subsection{Noncommutative toroidal orbifolds}
We identify a 2-torus ${\mathbb T}^2$ by $\reals^2/\integers^2$. $\reals^2$
acts on itself by translation and induces an action $\alpha$ on
${\mathbb T}^2$. For $\theta\in\reals$, we consider the symplectic form $J=\theta dx_1\wedge dx_2$ on
$\reals^2$. The group $SL_2(\integers)$ acts on $\reals^2$
preserving the lattice $\integers^2$ and therefore also acts on
${\mathbb T}^2$, which is denoted by $\beta$. Inside $SL_2(\integers)$, there
are cyclic subgroups generated by
\[
\begin{split}
\sigma_2=\left(\begin{array}{cc}-1&0\\
0&-1\end{array}\right),\qquad\qquad &\sigma_3=\left(\begin{array}{cc}-1&-1\\
1&0\end{array}\right)\\
\sigma_4=\left(\begin{array}{cc}0&-1\\
1&0\end{array}\right),\qquad\qquad\ \ &\sigma_6=\left(\begin{array}{cc}0&-1\\
1&1\end{array}\right).
\end{split}
\]
The element $\sigma_i$ generates a cyclic subgroup $\integers_i$ of
$SL_2(\integers)$ of order $i=2,3,4,6$. In this example, the group
$SL_2(\reals, J)$ is identical to the group $SL_2(\reals)$. Define
$\rho:\integers_i\to SL_2(\reals)$ to be the inclusion. And it is
straightforward to check the actions $\beta$ of $\integers_i$ on
${\mathbb T}^2$, $\rho$ of $\integers_i$ on $\reals^2$, and $\alpha$ of
$\reals^2$ on ${\mathbb T}^2$ satisfy Eq. (\ref{eq:action}). As is explained
in Sec. \ref{sec:intro}, the group $\integers_i$ naturally acts on
Rieffel's deformation $A_J$, which is the quantum torus $A_\theta$.
Theorem \ref{thm:main-k} states that
\[
K_\bullet(A_J\rtimes \integers_i)=K_\bullet(A\rtimes \integers_i).
\]
We recover with a completely different proof the result of \cite[Cor.
2.2]{ech-luck-phil:k-qtorus}. We have brought the question of computation of
 $K$-groups of these noncommutative orbifolds to a purely topological setting, and we refer
 to \cite{ech-luck-phil:k-qtorus}  and references therein for the explicit computation of the $K$-groups
 of the undeformed algebras $A\rtimes \integers_i$, $i=2,3,4,6$. For example, when $i=2$, the $K$-groups of $A\rtimes\integers_2$ are
\[
K_\bullet(A\rtimes \integers_2)\cong\left\{\begin{array}{ll}\integers^6,&\bullet=0,\\ 0,&\bullet=1.\end{array}\right.
\]
\subsection{Theta deformation}
Consider a 4-sphere $S^4$ centered at $(0,0,0,0,0)$ in $\reals^5$
with radius $1$. In coordinates, it is the set
\[
\left\{(x_1, \cdots, x_5)|
x_1^2+x_2^2+x_3^2+x_4^2+x_5^2={1}\right\}.
\]
Defines ${\mathbb T}^2$-action on $S^4$ by, for $0\leq t_1, t_2 < 2\pi$,
\[
\big((t_1,t_2),(x_1, \cdots, x_5)\big)\longrightarrow
(x_1, \cdots, x_5)\left(\begin{array}{ccccc}\cos(t_1)&\sin(t_1)&0&0&0\\
-\sin(t_1)&\cos(t_1)&0&0&0\\0&0&\cos(t_2)&\sin(t_2)&0\\
0&0&-\sin(t_2)&\cos(t_2)&0\\0&0&0&0&1
\end{array}\right).
\]
The same formula as above also defines an $\reals^2$-action $\alpha$
on $S^4$. The action $\beta$ of $\integers_2$  on $S^4$ is
by reflection
\[
(\sigma_2, (x_1, \cdots, x_5))\longrightarrow (x_1, -x_2, x_3, -x_4,
x_5).
\]
The group $\integers_2$ also acts on $\reals^2$ by reflection
\[
\rho: \sigma_2\longrightarrow \left(\begin{array}{cc}-1&0\\
0&-1
\end{array}\right).
\]
On $\reals^2$, for $\theta\in \reals$, consider the same symplectic form $J=\theta
dx_1\wedge dx_2$. It is easy to check that the actions $\alpha,
\beta, \rho$ satisfy Eq. (\ref{eq:action}). Consider the algebra
$C(S^4)$ of continuous functions on $S^4$. Rieffel's construction
defines a deformation $C(S^4_\theta)$ of $C(S^4)$ by $J$ and the action
$\alpha$, which is the $\theta$-deformation \cite{connes-landi:theta}
introduced by Connes and Landi. As is explained in Sec.
\ref{sec:intro}, $\integers_2$ acts strongly continuously on
$C(S^4_\theta)$. Theorem \ref{thm:main-k} states that
\[
K_\bullet(C(S^4)\rtimes \integers_2)=K_\bullet(C(S^4_\theta)\rtimes
\integers_2).
\]
The $K$-theory of $C(S^4)\rtimes \integers_2$ can be computed
\cite{phi:k-theory} topologically as the Grothendieck group of the monoid of all isomorphism classes of
$\integers_2$-equivariant vector bundles on $S^4$.

Notice that the quotient $S^4/\integers_2$ is an orbifold
homeomorphic to $S^4$. As an orbifold, $S^4/\integers_2$
\cite{mo-pr:covering} has a good covering $\{U_i\}$ such that each
$U_i$ and any none empty finite intersection $U_{i_1}\cap \cdots\cap
U_{i_k}$ is a quotient of a finite group action on $\reals^4$. Such
a good covering allows to compute the topological
$\integers_2$-equivariant $K$-theory of $S^4$ by the \v{C}ech
cohomology on $S^4/\integers_2$ of the sheaf ${\mathcal
K}^\bullet_{\integers_2}$ introduced by Segal
\cite{segal:equivariant}. The restriction of ${\mathcal
K}^\bullet_{\integers_2}$ to an open chart $U$ of $S^4/\integers_2$
is defined to be the $\integers_2$-equivariant $K$-theory of
$\pi^{-1}(U)$ with $\pi$ the canonical projection $S^4\to
S^4/\integers_2$. Locally, when $U$ is sufficiently small, we can
compute $K^\bullet_{\integers_2}(U)$ to be
$K^\bullet(\pi^{-1}(U)^{\sigma_2})\oplus K^\bullet(U)$, where
$\pi^{-1}(U)^{\sigma_2}$ is the $\sigma_2$-fixed point submanifold.
When $\bullet=0$, it is equal to
$\integers|_{\pi^{-1}(U)^{\sigma_2}}\oplus \integers_{U}$, and when
$\bullet=1$, it is zero. Gluing this local computation by the
Mayer-Vietoris sequence, we conclude that
\[
K_0(C(S^4_\theta)\rtimes \integers_2)=\integers^4,\qquad
K_1(C(S^4_\theta)\rtimes \integers_2)=0.
\]

\begin{remark}
We observe that in the above example, the group $\integers_2$ is not
essential. Our computations generalize to
$K_\bullet(C^\infty(S^4_\theta)\rtimes \integers_i)$, for $i=3,4,6$.
\end{remark}

\vspace{2mm}

{\small \noindent{Xiang Tang}, Department of Mathematics, Washington
University, St. Louis, MO, 63130, U.S.A.,
Email: xtang@math.wustl.edu.

\vspace{2mm}

\noindent{Yi-Jun Yao}, School of Mathematical Sciences, Fudan
University, Shanghai 200433, P.R.China., Email:
yaoyijun@fudan.edu.cn.

}
\end{document}